\documentclass[12pt,reqno]{amsart}
\usepackage{amssymb,amsthm,url}
\usepackage[breaklinks=true]{hyperref}
\usepackage[utf8]{inputenc}
\usepackage[OT2,T1]{fontenc}
\usepackage{bbm}
\usepackage{pdfsync}
\usepackage{ytableau}
\usepackage{graphicx}
\swapnumbers
\let\<\langle
\let\>\rangle

\advance\hoffset by -0.5truein
\textwidth=150truemm
\textheight=210truemm

\DeclareSymbolFont{cyrletters}{OT2}{wncyr}{m}{n}
\DeclareMathSymbol{\Sha}{\mathalpha}{cyrletters}{"58}

\theoremstyle{definition}
\newtheorem{definition}{Definition}[section]

\newtheorem{example}[definition]{Example}

\theoremstyle{plain}

\newtheorem{theorem}[definition]{Theorem}

\def\Lie{\mathrm{Lie}}

\def\Com{\mathrm{Com}}

\def\M{\mathrm{M}}

\def\Com{\mathrm{Com}}

\def\TP{\mathrm{TP}}
\def\F{\mathrm{F}}
\def\GD{\mathrm{GD}}
\def\manifold{\mathrm{manifold}}

\def\Com{\mathrm{Com}}

\usepackage[all]{xy}
\usepackage{pdfsync}
\usepackage{ytableau}

\newcommand{\sudda}[1]{}

\begin{document}

\title[On the free metabelian transposed Poisson and F-manifold algebras]{On the free metabelian transposed Poisson and F-manifold algebras}

\author{K. Abdukhalikov}

\address{UAE University, Al Ain, UAE}

\email{abdukhalik@uaeu.ac.ae}

\author{B. K. Sartayev*}

\address{Narxoz University, Almaty, Kazakhstan and UAE University, Al Ain, UAE}

\email{baurjai@gmail.com}

\keywords{Transposed Poisson algebra, F-manifold algebra, metabelian identity, free algebra, polynomial identities}

\subjclass[2020]{17A30, 17A50, 17D25}

\thanks{${}^{*}$Corresponding author: Bauyrzhan Sartayev   (baurjai@gmail.com)}

\maketitle

\begin{abstract} 
In this paper, we consider free transposed Poisson algebra and free F-manifold algebra with an additional metabelian identity. We construct a linear basis for both free metabelian transposed Poisson algebra and free metabelian F-manifold algebra.
\end{abstract}

\section{Introduction}

Algebras with metabelian identities give significant results in classical problems of algebra such as Specht problem, Gr\"obner-Shirshov theory, describing of symmetric polynomials etc. Another application of algebras with additional metabelian identity is to solve the embedding problem as embedding of metabelian Malcev algebras into corresponding alternative algebras \cite{MMal't}. Another result in this direction is the embedding of metabelian Lie algebras into corresponding associative algebras \cite{MS2022}. The variety of metabelian Lie algebras plays a big role in various directions of mathematics \cite{CHF, Romankov}.

Let us focus on well-known results on the bases of the free metabelian algebras with an additional identity of degree $3$. Foundational work on the basis of the free metabelian Lie algebras was given in \cite{Bahturin1973}. A basis of the free metabelian Leibniz algebra was constructed in \cite{MLeib}. Bases of the free metabelian Novikov and Lie-admissible algebras were given in \cite{DAS}. The construction of the basis for the free metabelian Malcev algebra was shown in \cite{MMal't}.

In this paper, we consider free transposed Poisson and $\F$-manifold algebras with additional metabelian identities. Although it may seem that these two varieties of algebras do not have a common ground for considering them, the variety of transposed Poisson algebras lies in the variety of $\F$-manifold algebras, and in addition, it lies in the variety of $\GD$-algebras \cite{Com-GD}. The identities of GD-algebra emerged in the paper \cite{GD83} as a tool for constructing Hamiltonian operators in formal calculus of variations. It is well-known that GD-algebras are in one-to-one correspondence with quadratic Lie conformal algebras playing an important role in the theory of vertex operators \cite{Xu2000}. Constructing the basis of the free $\GD$-algebra is still an open problem. The analogical problem is not solved for the free transposed Poisson algebra. At the moment, even the basis of a free one-generated transposed Poisson algebra is unknown. The same problem for metabelian $\GD$-algebra is not trivial. However, the basis of $\F$-manifold operad via pre-Lie operad was constructed in \cite{Dotsenko2019}. Also, the basis of the free special $\GD$-algebra is given in \cite{GS2023}. 

Many open problems associated with $\GD$-algebras and their generalizations are stated in \cite{TP3}. One of these problems is to prove that any transposed Poisson algebra is special, i.e., it can be embedded into appropriate differential Poisson algebra. Using this conjecture, the basis of the free special $\GD$-algebra and the compatible condition of the transposed Poisson algebra, it may be possible to find the basis of a free transposed Poisson algebra.

The listed algebras have attracted much attention in recent years. For example, the results on the transposed Poisson algebras were given in \cite{TP1, TP2, TP3, TP4}. The application of $\F$-manifold algebras and operad was shown in \cite{F-man1, F-man2, F-man3, F-man4}. Based on the above papers and open problems, we believe that there is enough motivation to consider transposed Poisson and $\F$-manifold algebras with the additional metabelian identity.

In this paper, all algebras are defined over a field of characteristic~0.

\section{Free metabelian transposed Poisson algebra}

An algebra with two binary operations $\cdot$ and $[\cdot,\cdot]$ is called transposed Poisson if the first one is an operation of associative-commutative algebra and the second one is an operation of Lie algebra with the following additional identity:
\begin{equation}\label{tp}
a[b,c]=1/2([ab,c]+[b,ac]).
\end{equation}
Metabelian identity  for transposed Poisson algebra is defined as follows:
\begin{equation}\label{10}
[[a,b],[c,d]]=[[a,b],cd]=[ab,cd]=abcd=
ab[c,d]=[a,b][c,d]=0.
\end{equation}
As a consequence, we obtain
\begin{equation}\label{11}
[[[a,b],c],d]=[[[a,b],d],c]    
\end{equation}
and
\begin{equation}\label{12}
[[ab,c],d]=[[ab,d],c].
\end{equation}
In addition, one can obtain that
\begin{equation}\label{13}
[[ab,c],d]-[[bc,a],d]+[[cd,a],b]-[[ad,c],b]=0
\end{equation}
and
\begin{equation}\label{14}
[abc,d]=[abd,c]
\end{equation}
hold in metabelian transposed Poisson algebra.

Let $X=\{x_1,x_2\ldots\}$ be a countable set of generators. We denote by $\TP\<X\>$ and $\M\TP\<X\>$ the free transposed Poisson algebra and free metabelian transposed Poisson, respectively.

It is easy to see that the identity (\ref{tp}) provides the spanning monomials of $\TP\<X\>$ algebra. More precisely, the compatible condition of transposed Poisson algebra provides the following rewriting rule:
\begin{equation}\label{tprule}
\Com(\Lie)\rightsquigarrow\Lie(\Com),
\end{equation}
i.e., every monomial of $\TP\<X\>$ algebra can be written as a sum of monomials of the following form:
$$[[\cdots[\mathcal{C}_n,\mathcal{C}_{n-1}]\cdots,\mathcal{C}_2],\mathcal{C}_1],$$
where $\mathcal{C}_i$ is a pure monomial of the free associative-commutative algebra.

It is easy to verify that the free transposed Poisson algebra does not satisfy distribute law, in the sense of \cite{Markl}. For this reason, the transposed Poisson operad is not Koszul. This result was proved in \cite{Dzhuma}.

Let us consider $3$ types of monomials in $\M\TP\<X\>$ algebra:
$$[[\cdots[[[x_{i_1},x_{i_2}],x_{i_3}],x_{i_4}],\cdots],x_{i_n}],$$
$$[[\cdots[[x_{i_1}x_{i_2},x_{i_3}],x_{i_4}],\cdots],x_{i_n}]$$
and
$$[[\cdots[x_{i_1}x_{i_2}x_{i_3},x_{i_4}],\cdots],x_{i_n}],$$
where $n\geq 4$. For each type of monomials, we define the sets $\mathcal{T}_{n,1}$, $\mathcal{T}_{n,2}$ and $\mathcal{T}_{n,3}$ as follows:
$$\mathcal{T}_{n,1}=\{[[\cdots[[[x_{i_1},x_{i_2}],x_{i_3}],x_{i_4}],\cdots],x_{i_n}] \;|\; i_1>i_2\leq i_3 \leq i_4 \leq\cdots \leq i_n\},$$
\begin{gather*}
\mathcal{T}_{n,2}=\{[[\cdots[[x_{i_1}x_{i_2},x_{i_3}],x_{i_4}],\cdots],x_{i_n}] \;|\; i_1 \leq i_2,\; i_1<i_3\leq\cdots\leq i_n \;\textrm{or}\; \\
i_3\leq\cdots\leq i_n<i_1 \leq i_2\},    
\end{gather*}
$$\mathcal{T}_{n,3}=\{[[\cdots[x_{i_1}x_{i_2}x_{i_3},x_{i_4}],\cdots],x_{i_n}] \;|\; i_1\leq i_2\leq i_3 \leq i_4 \leq\cdots \leq i_n\ \}.$$
We set $\mathcal{T}_n=\mathcal{T}_{n,1}\cup\mathcal{T}_{n,2}\cup\mathcal{T}_{n,3}$. For $n=1,2$ and $3$, we define 
$$\mathcal{T}_1=\{x_i\},\;\mathcal{T}_2=\{[x_{i_1},x_{i_2}],x_{j_1}x_{j_2} \;|\; i_1<i_2,\; j_1\leq j_2\},$$
$$\mathcal{T}_3=\{x_{i_1}x_{i_2}x_{i_3}, [[x_{j_1},x_{j_2}],x_{j_3}], [x_{i_1}x_{i_2},x_{k_3}] \;|\; i_1\leq i_2\leq i_3,\;j_1<j_2,\;j_1<j_3 \}.$$

\begin{theorem}
The set $\bigcup_i \mathcal{T}_i$ is a linear basis of the algebra $\M\TP\<X\>$.
\end{theorem}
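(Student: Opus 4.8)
The plan is to prove the statement in the usual two stages: first that $\bigcup_i\mathcal{T}_i$ spans $\M\TP\<X\>$, and then that it is linearly independent. Since all the defining relations are multihomogeneous, both stages can be run one multidegree at a time, following the general pattern used for related metabelian varieties \cite{DAS}; in particular all the low-degree cases ($n\le 3$) become finite checks.

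\emph{Spanning.} By the rewriting rule (\ref{tprule}), every element of $\M\TP\<X\>$ is a linear combination of left-normed brackets $[[\cdots[\mathcal{C}_n,\mathcal{C}_{n-1}],\cdots],\mathcal{C}_1]$ whose entries $\mathcal{C}_i$ are pure commutative monomials. I would then pare this spanning set down using the metabelian relations (\ref{10}) and their consequences (\ref{11})--(\ref{14}). First, $abcd=0$ kills every such monomial having an entry of degree $\ge 4$. Next, $[ab,cd]=0$ (hence $[\mathcal{C},\mathcal{C}']=0$ once $\deg\mathcal{C},\deg\mathcal{C}'\ge 2$) together with $[[a,b],cd]=0$ force that at most one entry $\mathcal{C}_i$ can have degree $>1$ in a surviving monomial, and that such a ``heavy'' entry can occur only among the two innermost slots $\mathcal{C}_n,\mathcal{C}_{n-1}$; using anticommutativity of the innermost bracket it is then moved into the slot $\mathcal{C}_n$. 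What remains (for $n\ge 4$) are precisely monomials of the three displayed shapes. It then remains to impose the index conditions: for the Lie type one sorts the tail using (\ref{11}) and runs the classical Jacobi reduction for the free metabelian Lie algebra (cf.\ \cite{Bahturin1973}) to reach $i_1>i_2\le i_3\le\cdots\le i_n$; for the degree-two-product type one sorts the tail using (\ref{12}) and applies (\ref{13}) to push the minimal index $i_1$ of the product either strictly below or strictly above the whole tail, which is exactly the disjunction defining $\mathcal{T}_{n,2}$; for the degree-three-product type one uses (\ref{14}), together with the analogue of (\ref{11})--(\ref{12}) at higher bracket levels, to sort all the indices into $i_1\le i_2\le\cdots\le i_n$. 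This shows $\bigcup_i\mathcal{T}_i$ spans.

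\emph{Linear independence.} Here I would turn the reduction procedure into an algebra structure. Let $A$ be the vector space with basis $\bigcup_i\mathcal{T}_i$, and define $\cdot$ and $[\cdot,\cdot]$ on basis elements by the normal-form rules used above, extended bilinearly. The real work is to check that $A$ is honestly a transposed Poisson algebra satisfying the metabelian identity: commutativity and associativity of $\cdot$, anticommutativity and the Jacobi identity for $[\cdot,\cdot]$, the compatibility (\ref{tp}), and the identities (\ref{10}); equivalently, that the rewriting system is confluent on each multidegree (a composition--diamond type verification). Granting this, $x_i\mapsto x_i$ extends by the universal property to a homomorphism $\varphi\colon\M\TP\<X\>\to A$; it is onto because the $x_i$ generate $A$, and it carries each element of $\bigcup_i\mathcal{T}_i\subseteq\M\TP\<X\>$ to the corresponding basis vector of $A$. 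Since those basis vectors are independent in $A$, the set $\bigcup_i\mathcal{T}_i$ is independent in $\M\TP\<X\>$, which finishes the proof.

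\emph{The main obstacle.} The spanning half is bookkeeping; the crux is the consistency of the normal-form rules in the second half — that the various ways of reducing a monomial (applying (\ref{tprule}) at different positions and then (\ref{10})--(\ref{14}) in different orders) all terminate at the same normal form, and that the index constraints cutting out $\mathcal{T}_{n,1},\mathcal{T}_{n,2},\mathcal{T}_{n,3}$ are exactly tight, in particular that the disjunction in $\mathcal{T}_{n,2}$ neither omits nor duplicates monomials. A practical way to organize (and partly bypass) this is to pair the reduction with concrete evaluations — representations on polynomial-type modules in the spirit of the Magnus embedding for free metabelian Lie algebras — tailored to separate the candidate basis elements.
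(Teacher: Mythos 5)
Your proposal is correct and follows essentially the same two-stage route as the paper: the spanning argument via the rewriting rule (\ref{tprule}) and the consequences (\ref{11})--(\ref{14}) of the metabelian identities matches the paper's Cases 1--3, and the linear-independence argument via endowing the span of $\bigcup_i\mathcal{T}_i$ with a multiplication table, verifying the defining identities, and invoking the universal property is exactly what the paper does (it, too, leaves the consistency check as a ``straightforward calculation''). The only difference is presentational: you flag the confluence/consistency verification as the crux and suggest a Magnus-type representation as a possible shortcut, whereas the paper writes out the multiplication table explicitly and asserts the check.
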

\begin{proof}
The result is obvious for degrees $1$, $2$ and $3$. For monomials of greater degrees, we first show that any monomial of $\M\TP\<X\>$ algebra can be written as a sum of monomials from $\bigcup_i\mathcal{T}_i$, where $i\geq 4$. The rewriting rule (\ref{tprule}) provides spanning set 
$$[[\cdots[\mathcal{C}_n,\mathcal{C}_{n-1}]\cdots,\mathcal{C}_2],\mathcal{C}_1].$$
Metabelian identities provide that the degree of $\mathcal{C}_n$ not greater than $3$, and  $\mathcal{C}_{i}$ is a gene\-rator, where $i=1,\ldots,n-1$. Otherwise, this monomial is equal to $0$.
For $\mathcal{C}_n$, we consider $3$ cases.

Case 1: Degree of $\mathcal{C}_n$ is $1$. In this case $\mathcal{C}_n$ is a generator, i.e., we have $$[[\cdots[[[x_{i_1},x_{i_2}],x_{i_3}],x_{i_4}],\cdots],x_{i_n}],$$
which is a pure Lie word. In this case, the spanning set is $\mathcal{T}_{n,1}$ by \cite{Bahturin1973}.

Case 2: Degree of $\mathcal{C}_n$ is $2$. In this case, we consider the monomials
$$[[\cdots[[x_{i_1}x_{i_2},x_{i_3}],x_{i_4}],\cdots],x_{i_n}].$$
The generators $x_{i_3},x_{i_4},\ldots,x_{i_n}$ are rearrangeable by (\ref{12}). The conditions
$i_1 \leq i_2,\; i_1<i_3\leq\cdots\leq i_n$ or
$i_3\leq\cdots\leq i_n<i_1 \leq i_2$ can be achieved by (\ref{13}) as follows:
\noindent for given arbitrary monomial
\begin{multline*}
[[\cdots[[x_{j_1}x_{j_2},x_{j_3}],x_{j_4}],\cdots],x_{j_n}]=[[\cdots[[x_{j_1}x_{j_2},x_{j_t}],x_{j_r}],\cdots],x_{j_n}]=\\
-[[\cdots[[x_{j_t}x_{j_r},x_{j_1}],x_{j_2}],\cdots],x_{j_n}]+[[\cdots[[x_{j_t}x_{j_1},x_{j_r}],x_{j_2}],\cdots],x_{j_n}]+\\
[[\cdots[[x_{j_r}x_{j_2},x_{j_t}],x_{j_1}],\cdots],x_{j_n}],
\end{multline*} 
where $j_t$ and $j_r$ are minimal and maximal indexes, respectively. In the obtained result, the first and second monomials satisfy the given conditions. For the third monomial, we have
\begin{multline*}
[[\cdots[[x_{j_r}x_{j_2},x_{j_t}],x_{j_1}],\cdots],x_{j_n}]=[[\cdots[[x_{j_r}x_{j_2},x_{j_t}],x_{j_{r-1}}],\cdots],x_{j_n}]=\\
[[\cdots[[x_{j_2}x_{j_t},x_{j_r}],x_{j_{r-1}}],\cdots],x_{j_n}]-[[\cdots[[x_{j_t}x_{j_{r-1}},x_{j_r}],x_{j_{2}}],\cdots],x_{j_n}]+\\
[[\cdots[[x_{j_{r-1}}x_{j_r},x_{j_t}],x_{j_{2}}],\cdots],x_{j_n}].
\end{multline*}
All obtained monomials satisfy the given conditions.
 
Case 3: Degree of $\mathcal{C}_n$ is $3$. In this case, we consider the monomials
$$[[\cdots[x_{i_1}x_{i_2}x_{i_3},x_{i_4}],\cdots],x_{i_n}].$$
The generators $x_{i_4},\ldots,x_{i_n}$ are rearrangeable by (\ref{12}). By (\ref{14}), $x_{i_3}$ and $x_{i_4}$ can be ordered.

To prove that the set $\bigcup_i \mathcal{T}_i$ is the basis of $\M\TP\<X\>$ we construct a multiplication table for algebra $A\<X\>$ with the basis $\bigcup_i \mathcal{T}_i$. Here $A\<X\>$ is a algebra with two multiplications $\circ$ and $\bullet$, and we define the multiplications as follows:
\[
\begin{gathered}
X_1*X_2=0\;\; \text{if $X_1,X_2\in\bigcup_i \mathcal{T}_i$, $\mathrm{deg}(X_1),\mathrm{deg}(X_2)>1$ and $*$ is $\bullet$ or $\circ$};
\end{gathered}
\]
To save space instead of $\bullet$ and $\circ$ we write $[\cdot,\cdot]$ and $\cdot$, respectively. Firstly, we define the multiplication table from degree $5$.
We define the multiplication of basis monomials with generator $x_t$ under the operation $\bullet$.

For $$[[\cdots[[[x_{i_1},x_{i_2}],x_{i_3}],x_{i_4}],\cdots],x_{i_n}]\bullet x_t,$$
the multiplication is defined according to the multiplication table of free metabelian Lie algebras. For 
$[[\cdots[[x_{i_1}x_{i_2},x_{i_3}],x_{i_4}],\cdots],x_{i_n}]\bullet x_t,$ we consider $2$ cases:
\begin{multline*}
[[\cdots[[x_{i_1}x_{i_2},x_{i_3}],x_{i_4}],\cdots],x_{i_n}]\bullet x_t=
[[\cdots[[x_tx_{i_2},x_{i_1}],x_{i_3}],\cdots],x_{i_n}]-\\
[[\cdots[[x_{t}x_{i_n},x_{i_1}],x_{i_2}],\cdots],x_{i_{n-1}}]+[[\cdots[[x_{t}x_{i_1},x_{i_2}],x_{i_3}],\cdots],x_{i_n}]-\\
[[\cdots[[x_{t}x_{i_{n-1}},x_{i_1}],x_{i_2}],\cdots],x_{i_n}]+[[\cdots[[x_{i_{n-1}}x_{i_n},x_{t}],x_{i_1}],\cdots],x_{i_{n-2}}],
\end{multline*}
where $t$ is a minimal index;
\begin{multline*}
[[\cdots[[x_{i_1}x_{i_2},x_{i_3}],x_{i_4}],\cdots],x_{i_n}]\bullet x_t=\\
[\cdots[[[[\cdots[[x_{i_1}x_{i_2},x_{i_3}],x_{i_4}],\cdots],x_{i_m}],x_{t}],x_{i_m+1}]\cdots,x_{i_n}],
\end{multline*}
where $i_m\leq t\leq i_{m+1}$;

For
$[[\cdots[x_{i_1}x_{i_2}x_{i_3},x_{i_4}],\cdots],x_{i_n}]\bullet x_t,$
we consider $2$ cases:
\begin{multline*}
[[\cdots[x_{i_1}x_{i_2}x_{i_3},x_{i_4}],\cdots],x_{i_n}]\bullet x_t=\\
[\cdots[[[[\cdots[x_{i_1}x_{i_2}x_{i_3},x_{i_4}],\cdots],x_{i_m}],x_{t}],x_{i_m+1}]\cdots,x_{i_n}];
\end{multline*}
where $i_m\leq t\leq i_{m+1}$.
$$[[\cdots[x_{i_1}x_{i_2}x_{i_3},x_{i_4}],\cdots],x_{i_n}]\bullet x_t=
[\cdots[[[[x_{i_1}x_{i_2}x_{t},x_{i_3}],x_{i_4}],\cdots,x_{i_n}],$$
where $t\leq i_{3}$;

Now, we define the multiplication of basis monomials with generator $x_t$ under the operation $\circ$. For
$[[\cdots[[[x_{i_1},x_{i_2}],x_{i_3}],x_{i_4}],\cdots],x_{i_n}]\circ x_t,$
we consider $2$ cases:
\begin{gather*}
[[\cdots[[[x_{i_1},x_{i_2}],x_{i_3}],x_{i_4}],\cdots],x_{i_n}]\circ x_t=1/2^{n-1}([[\cdots[[x_{i_2}x_{i_1},x_{i_3}],x_{i_4}],\cdots],x_{i_n}]-\\
[[\cdots[[x_{i_2}x_{i_n},x_{i_1}],x_{i_3}],\cdots],x_{i_{n-1}}]-[[\cdots[[x_{i_2}x_{i_{n-1}},x_{i_n}],x_{i_1}],\cdots],x_{i_{n-2}}]+\\
[[\cdots[[x_{i_{n-1}}x_{i_n},x_{i_2}],x_{i_1}],\cdots],x_{i_{n-2}}])
\end{gather*}
where $t$ is not a minimal index;
\begin{gather*}
[[\cdots[[[x_{i_1},x_{i_2}],x_{i_3}],x_{i_4}],\cdots],x_{i_n}]\circ x_t=1/2^{n-1}([[\cdots[[[x_{i_1}x_t,x_{i_2}],x_{i_3}],x_{i_4}],\cdots],x_{i_n}]-\\
[[\cdots[[[x_{i_2}x_t,x_{i_1}],x_{i_3}],x_{i_4}],\cdots],x_{i_n}]),
\end{gather*}
where $t$ is a minimal index;

For 
$[[\cdots[[x_{i_1}x_{i_2},x_{i_3}],x_{i_4}],\cdots],x_{i_n}]\circ x_t,$
we consider $2$ cases:
\begin{multline*}
[[\cdots[[x_{i_1}x_{i_2},x_{i_3}],x_{i_4}],\cdots],x_{i_n}]\circ x_t=\\
1/2^{n-2}([\cdots[[[[\cdots[x_{i_1}x_{i_2}x_{i_3},x_{i_4}],\cdots],x_{i_m}],x_{t}],x_{i_m+1}]\cdots,x_{i_n}]),
\end{multline*}
where $i_m\leq t\leq i_{m+1}$;
$$[[\cdots[[x_{i_1}x_{i_2},x_{i_3}],x_{i_4}],\cdots],x_{i_n}]\circ x_t=1/2^{n-2}(
[\cdots[[[[x_{i_1}x_{i_2}x_{t},x_{i_3}],x_{i_4}],\cdots,x_{i_n}]),$$
where $t\leq i_{3}$;

For $[[\cdots[x_{i_1}x_{i_2}x_{i_3},x_{i_4}],\cdots],x_{i_n}]\circ x_t,$ we define
$$[[\cdots[x_{i_1}x_{i_2}x_{i_3},x_{i_4}],\cdots],x_{i_n}]\circ x_t=0.$$

Up to degree 4, we define multiplication in $A\<X\>$ that is consistent with identities (\ref{tp}), (\ref{11}), (\ref{12}) and (\ref{13}).
By straightforward calculation, we can check that an algebra $A\<X\>$ satisfies the defining identities of algebra $\M\TP\<X\>$. It remains to note that $A\<X\>\cong \M\TP\<X\>$.

\end{proof}

\section{Metabelian F-manifold operad}

A $F$-$manifold$ algebra is a triple $(\cdot,[\cdot,\cdot],X)$, where the multiplication $\cdot$ is associa\-tive-commutative and $[\cdot,\cdot]$ is a Lie bracket with the additional identity
\begin{multline}\label{eq:manifold}
    [a\cdot b,c \cdot d] = [a \cdot b,c] \cdot d +[a \cdot b,d] \cdot c + a \cdot [b,c \cdot d]+ b \cdot [a,c \cdot d]- \\
(a \cdot c) \cdot [b,d]-(b \cdot c) \cdot [a,d]-(b \cdot d) \cdot [a,c]-(a \cdot d) \cdot [b,c].
\end{multline}
A metabelian $F$-manifold algebra has additional identities (\ref{10}).
In metabelian $F$-manifold algebra, the identity (\ref{eq:manifold}) can be written as follows:
\begin{equation}\label{001}
[a \cdot b,c] \cdot d +[a \cdot b,d] \cdot c + a \cdot [b,c \cdot d]+ b \cdot [a,c \cdot d]=0.
\end{equation}

Since both multiplications are the same with transposed Poisson algebra, the identities (\ref{11}) and (\ref{12}) also hold in metabelian $F$-manifold algebra. 
In addition, one can obtain that
\begin{equation}\label{21}
[[a,b]c,d]e=-[[a,b]c,e]d
\end{equation}
and
\begin{equation}\label{22}
[abc,d]e=-[abc,e]d
\end{equation}
hold in metabelian $F$-manifold algebra.

Let us construct a basis of the metabelian $F$-manifold operad in terms of binary trees with two types of vertices $\bullet$ and $\circ$. We consider only trees of the following form:

\begin{picture}(30,80)
\put(102,33){$A=$}
\put(147,68){$*$}
\put(150,70){\line(-1,-1){15}}
\put(150,70){\line(1,-1){15}}
\put(162,52){$*$}
\put(165,55){\line(-1,-1){15}}
\put(165,55){\line(1,-1){15}}
\put(182,33){\normalsize\rotatebox[origin=c]{320}{$\ldots$}}
\put(192,23){$*$}
\put(195,25){\line(-1,-1){15}}
\put(195,25){\line(1,-1){15}}
\put(126,49){$x_{i_1}$}
\put(147,35){$x_{i_2}$}
\put(177,3){$x_{i_{n-1}}$}
\put(207,3){$x_{i_{n}}$}
\end{picture}

\noindent We place on vertices of the tree $\bullet$ and $\circ$ in all possible ways. On the leaves of the tree, we place generators from the countable set $X$. This tree in a unique way corresponds to the sequence $(*_{x_{i_1}},*_{x_{i_2}},\ldots,*_{x_{i_{n-1}}},x_{i_{n}}).$

\begin{example}
If

\begin{picture}(30,80)
\put(100,60){$A=$}
\put(147,68){$\bullet$}
\put(150,70){\line(-1,-1){15}}
\put(150,70){\line(1,-1){15}}
\put(162,52){$\circ$}
\put(165,55){\line(-1,-1){15}}
\put(165,55){\line(1,-1){15}}
\put(126,49){$x_1$}
\put(147,35){$x_2$}
\put(177,35){$x_3$}
\end{picture}

\vspace*{-\baselineskip}
\vspace*{-\baselineskip}

\noindent then $A$ corresponds to $(\bullet_{x_1},\circ_{x_2},x_3)$. We define the conditions on a set of sequences as follows:
\end{example}

1) The sequence of several consecutive white dots are not allowed, i.e. we do not consider the sequences $(\ldots,\circ_{i_{k-1}},\circ_{i_k},\bullet_{i_{k+1}}\ldots)$ or $(\ldots,\circ_{i_{k-1}},\circ_{i_k},\circ_{i_{k+1}}\ldots)$. 

2) If there are several consecutive black dots in a sequence, then all corresponding generators of these vertices are ordered, i.e., for
$(\ldots,\circ_{i_{k-1}},\bullet_{i_k},\bullet_{i_{k+1}},\ldots,\bullet_{i_{l-1}},\circ_{i_{l}},\ldots)$, we have $i_k\geq i_{k+1}\geq\ldots\geq i_{l-1}$;

3) If the rightmost vertex is white then the rightmost generator is less than the previous one, i.e., for $(\ldots,\circ_{i_{n-1}},x_{i_n})$, we have $i_{n-1}\leq i_n$;

4) If a given consecutive sequence of black dots continues to the rightmost vertex and the number of black dots is bigger than $2$, then all the generators of these vertices are ordered and the rightmost generator is bigger than the previous one, i.e., for
$(\ldots,\bullet_{i_k},\bullet_{i_{k+1}},\ldots,\bullet_{i_{n-1}},x_{i_n})$, we have $i_k\geq i_{k+1}\geq\ldots\geq i_{n-1}<i_n$;

5) In condition $3$, if the number of black dots is not bigger than $2$, then the generators are ordered as in Lyndon-Shirshov words, i.e., the basis monomials of the free Lie algebra of degrees 2 and 3;

6) If we have $(\ldots,\circ_{i_{k-1}},\bullet_{i_{k}},\circ_{i_{k+1}},\ldots)$ and $\circ_{i_{k+1}}$ is not rightmost vertex, then $i_{k-1}\geq i_{k}$.

7) If we have $(\ldots,\circ_{i_{n-3}},\bullet_{i_{n-2}},\circ_{i_{n-1}},x_{i_n})$, then $i_{n-3}>i_{n-2}>i_{n}$ is not allowed.

For every such tree, we define a monomial from the metabelian $F$-manifold operad as follows:
the tree with $n$ leaves is a right-normed monomial of degree $n$, i.e., this is the monomial $x_{i_1}*(x_{i_2}*(\ldots(x_{i_{n-1}}*x_{i_n})\ldots))$. The black multiplication $\bullet$ corresponds to the Lie bracket $[\cdot,\cdot]$ and white multiplication $\circ$ corresponds to $\cdot$. We denote by $\mathcal{T}$ a set of trees that satisfy conditions (1), (2), (3), (4), (5), (6), (7), and we denote by $\mathcal{F}$ the set of monomials which correspond to the trees from $\mathcal{T}$.

Let us construct the trees from the set $\mathcal{T}$ for degrees $4$, $5$ and $6$. For degrees up to $3$, the construction is obvious. Firstly, we construct the tress which satisfies the conditions $(1)$-$(6)$. For degree $4$, these trees are

\begin{picture}(30,80)
\put(7,68){$\bullet$}
\put(10,70){\line(-1,-1){15}}
\put(10,70){\line(1,-1){15}}
\put(22,52){$\bullet$}
\put(25,55){\line(-1,-1){15}}
\put(25,55){\line(1,-1){15}}
\put(36,38){$\bullet$}
\put(39,41){\line(-1,-1){15}}
\put(39,41){\line(1,-1){15}}
\put(20,11){3}

\put(57,68){$\circ$}
\put(60,70){\line(-1,-1){15}}
\put(60,70){\line(1,-1){15}}
\put(72,52){$\bullet$}
\put(75,55){\line(-1,-1){15}}
\put(75,55){\line(1,-1){15}}
\put(86,38){$\bullet$}
\put(89,41){\line(-1,-1){15}}
\put(89,41){\line(1,-1){15}}
\put(70,11){8}

\put(107,68){$\bullet$}
\put(110,70){\line(-1,-1){15}}
\put(110,70){\line(1,-1){15}}
\put(122,52){$\circ$}
\put(125,55){\line(-1,-1){15}}
\put(125,55){\line(1,-1){15}}
\put(136,38){$\bullet$}
\put(139,41){\line(-1,-1){15}}
\put(139,41){\line(1,-1){15}}
\put(120,11){12}

\put(157,68){$\circ$}
\put(160,70){\line(-1,-1){15}}
\put(160,70){\line(1,-1){15}}
\put(172,52){$\circ$}
\put(175,55){\line(-1,-1){15}}
\put(175,55){\line(1,-1){15}}
\put(186,38){$\bullet$}
\put(189,41){\line(-1,-1){15}}
\put(189,41){\line(1,-1){15}}
\put(170,11){0}

\put(207,68){$\bullet$}
\put(210,70){\line(-1,-1){15}}
\put(210,70){\line(1,-1){15}}
\put(222,52){$\bullet$}
\put(225,55){\line(-1,-1){15}}
\put(225,55){\line(1,-1){15}}
\put(236,38){$\circ$}
\put(239,41){\line(-1,-1){15}}
\put(239,41){\line(1,-1){15}}
\put(220,11){6}

\put(257,68){$\circ$}
\put(260,70){\line(-1,-1){15}}
\put(260,70){\line(1,-1){15}}
\put(272,52){$\bullet$}
\put(275,55){\line(-1,-1){15}}
\put(275,55){\line(1,-1){15}}
\put(286,38){$\circ$}
\put(289,41){\line(-1,-1){15}}
\put(289,41){\line(1,-1){15}}
\put(270,11){12}

\put(307,68){$\bullet$}
\put(310,70){\line(-1,-1){15}}
\put(310,70){\line(1,-1){15}}
\put(322,52){$\circ$}
\put(325,55){\line(-1,-1){15}}
\put(325,55){\line(1,-1){15}}
\put(336,38){$\circ$}
\put(339,41){\line(-1,-1){15}}
\put(339,41){\line(1,-1){15}}
\put(320,11){4}

\put(357,68){$\circ$}
\put(360,70){\line(-1,-1){15}}
\put(360,70){\line(1,-1){15}}
\put(372,52){$\circ$}
\put(375,55){\line(-1,-1){15}}
\put(375,55){\line(1,-1){15}}
\put(386,38){$\circ$}
\put(389,41){\line(-1,-1){15}}
\put(389,41){\line(1,-1){15}}
\put(370,11){0}

\end{picture}

\noindent Under each tree we write the number of monomials that satisfy conditions $(1)$-$(6)$ for the corresponding tree. By condition $(7)$, we reduce $3$ monomials from the tree  $(\circ_{i_{1}},\bullet_{i_{2}},\circ_{i_{3}},x_{i_4})$. Finally, we obtain $42$ basis monomials.

Analogically, we construct the trees of degree $5$ which satisfies the conditions $(1)$-$(6)$. These trees are

\begin{picture}(30,80)
\put(7,68){$\bullet$}
\put(10,70){\line(-1,-1){15}}
\put(10,70){\line(1,-1){15}}
\put(22,52){$\bullet$}
\put(25,55){\line(-1,-1){15}}
\put(25,55){\line(1,-1){15}}
\put(36,38){$\bullet$}
\put(39,41){\line(-1,-1){15}}
\put(39,41){\line(1,-1){15}}
\put(50,24){$\bullet$}
\put(53,27){\line(-1,-1){15}}
\put(53,27){\line(1,-1){15}}
\put(20,0){4}

\put(57,68){$\circ$}
\put(60,70){\line(-1,-1){15}}
\put(60,70){\line(1,-1){15}}
\put(72,52){$\bullet$}
\put(75,55){\line(-1,-1){15}}
\put(75,55){\line(1,-1){15}}
\put(86,38){$\bullet$}
\put(89,41){\line(-1,-1){15}}
\put(89,41){\line(1,-1){15}}
\put(100,24){$\bullet$}
\put(103,27){\line(-1,-1){15}}
\put(103,27){\line(1,-1){15}}
\put(80,0){15}

\put(107,68){$\bullet$}
\put(110,70){\line(-1,-1){15}}
\put(110,70){\line(1,-1){15}}
\put(122,52){$\circ$}
\put(125,55){\line(-1,-1){15}}
\put(125,55){\line(1,-1){15}}
\put(136,38){$\bullet$}
\put(139,41){\line(-1,-1){15}}
\put(139,41){\line(1,-1){15}}
\put(150,24){$\bullet$}
\put(153,27){\line(-1,-1){15}}
\put(153,27){\line(1,-1){15}}
\put(130,0){40}

\put(157,68){$\circ$}
\put(160,70){\line(-1,-1){15}}
\put(160,70){\line(1,-1){15}}
\put(172,52){$\circ$}
\put(175,55){\line(-1,-1){15}}
\put(175,55){\line(1,-1){15}}
\put(186,38){$\bullet$}
\put(189,41){\line(-1,-1){15}}
\put(189,41){\line(1,-1){15}}
\put(200,24){$\bullet$}
\put(203,27){\line(-1,-1){15}}
\put(203,27){\line(1,-1){15}}
\put(180,0){0}

\put(207,68){$\bullet$}
\put(210,70){\line(-1,-1){15}}
\put(210,70){\line(1,-1){15}}
\put(222,52){$\bullet$}
\put(225,55){\line(-1,-1){15}}
\put(225,55){\line(1,-1){15}}
\put(236,38){$\circ$}
\put(239,41){\line(-1,-1){15}}
\put(239,41){\line(1,-1){15}}
\put(250,24){$\bullet$}
\put(253,27){\line(-1,-1){15}}
\put(253,27){\line(1,-1){15}}
\put(230,0){30}

\put(257,68){$\circ$}
\put(260,70){\line(-1,-1){15}}
\put(260,70){\line(1,-1){15}}
\put(272,52){$\bullet$}
\put(275,55){\line(-1,-1){15}}
\put(275,55){\line(1,-1){15}}
\put(286,38){$\circ$}
\put(289,41){\line(-1,-1){15}}
\put(289,41){\line(1,-1){15}}
\put(300,24){$\bullet$}
\put(303,27){\line(-1,-1){15}}
\put(303,27){\line(1,-1){15}}
\put(280,0){30}

\put(307,68){$\bullet$}
\put(310,70){\line(-1,-1){15}}
\put(310,70){\line(1,-1){15}}
\put(322,52){$\circ$}
\put(325,55){\line(-1,-1){15}}
\put(325,55){\line(1,-1){15}}
\put(336,38){$\circ$}
\put(339,41){\line(-1,-1){15}}
\put(339,41){\line(1,-1){15}}
\put(350,24){$\bullet$}
\put(353,27){\line(-1,-1){15}}
\put(353,27){\line(1,-1){15}}
\put(330,0){0}

\put(357,68){$\circ$}
\put(360,70){\line(-1,-1){15}}
\put(360,70){\line(1,-1){15}}
\put(372,52){$\circ$}
\put(375,55){\line(-1,-1){15}}
\put(375,55){\line(1,-1){15}}
\put(386,38){$\circ$}
\put(389,41){\line(-1,-1){15}}
\put(389,41){\line(1,-1){15}}
\put(400,24){$\bullet$}
\put(403,27){\line(-1,-1){15}}
\put(403,27){\line(1,-1){15}}
\put(380,0){0}

\end{picture}

\begin{picture}(30,80)
\put(7,68){$\bullet$}
\put(10,70){\line(-1,-1){15}}
\put(10,70){\line(1,-1){15}}
\put(22,52){$\bullet$}
\put(25,55){\line(-1,-1){15}}
\put(25,55){\line(1,-1){15}}
\put(36,38){$\bullet$}
\put(39,41){\line(-1,-1){15}}
\put(39,41){\line(1,-1){15}}
\put(50,24){$\circ$}
\put(53,27){\line(-1,-1){15}}
\put(53,27){\line(1,-1){15}}
\put(20,0){10}

\put(57,68){$\circ$}
\put(60,70){\line(-1,-1){15}}
\put(60,70){\line(1,-1){15}}
\put(72,52){$\bullet$}
\put(75,55){\line(-1,-1){15}}
\put(75,55){\line(1,-1){15}}
\put(86,38){$\bullet$}
\put(89,41){\line(-1,-1){15}}
\put(89,41){\line(1,-1){15}}
\put(100,24){$\circ$}
\put(103,27){\line(-1,-1){15}}
\put(103,27){\line(1,-1){15}}
\put(80,0){30}

\put(107,68){$\bullet$}
\put(110,70){\line(-1,-1){15}}
\put(110,70){\line(1,-1){15}}
\put(122,52){$\circ$}
\put(125,55){\line(-1,-1){15}}
\put(125,55){\line(1,-1){15}}
\put(136,38){$\bullet$}
\put(139,41){\line(-1,-1){15}}
\put(139,41){\line(1,-1){15}}
\put(150,24){$\circ$}
\put(153,27){\line(-1,-1){15}}
\put(153,27){\line(1,-1){15}}
\put(130,0){60}

\put(157,68){$\circ$}
\put(160,70){\line(-1,-1){15}}
\put(160,70){\line(1,-1){15}}
\put(172,52){$\circ$}
\put(175,55){\line(-1,-1){15}}
\put(175,55){\line(1,-1){15}}
\put(186,38){$\bullet$}
\put(189,41){\line(-1,-1){15}}
\put(189,41){\line(1,-1){15}}
\put(200,24){$\circ$}
\put(203,27){\line(-1,-1){15}}
\put(203,27){\line(1,-1){15}}
\put(180,0){0}

\put(207,68){$\bullet$}
\put(210,70){\line(-1,-1){15}}
\put(210,70){\line(1,-1){15}}
\put(222,52){$\bullet$}
\put(225,55){\line(-1,-1){15}}
\put(225,55){\line(1,-1){15}}
\put(236,38){$\circ$}
\put(239,41){\line(-1,-1){15}}
\put(239,41){\line(1,-1){15}}
\put(250,24){$\circ$}
\put(253,27){\line(-1,-1){15}}
\put(253,27){\line(1,-1){15}}
\put(230,0){10}

\put(257,68){$\circ$}
\put(260,70){\line(-1,-1){15}}
\put(260,70){\line(1,-1){15}}
\put(272,52){$\bullet$}
\put(275,55){\line(-1,-1){15}}
\put(275,55){\line(1,-1){15}}
\put(286,38){$\circ$}
\put(289,41){\line(-1,-1){15}}
\put(289,41){\line(1,-1){15}}
\put(300,24){$\circ$}
\put(303,27){\line(-1,-1){15}}
\put(303,27){\line(1,-1){15}}
\put(280,0){10}

\put(307,68){$\bullet$}
\put(310,70){\line(-1,-1){15}}
\put(310,70){\line(1,-1){15}}
\put(322,52){$\circ$}
\put(325,55){\line(-1,-1){15}}
\put(325,55){\line(1,-1){15}}
\put(336,38){$\circ$}
\put(339,41){\line(-1,-1){15}}
\put(339,41){\line(1,-1){15}}
\put(350,24){$\circ$}
\put(353,27){\line(-1,-1){15}}
\put(353,27){\line(1,-1){15}}
\put(330,0){0}

\put(357,68){$\circ$}
\put(360,70){\line(-1,-1){15}}
\put(360,70){\line(1,-1){15}}
\put(372,52){$\circ$}
\put(375,55){\line(-1,-1){15}}
\put(375,55){\line(1,-1){15}}
\put(386,38){$\circ$}
\put(389,41){\line(-1,-1){15}}
\put(389,41){\line(1,-1){15}}
\put(400,24){$\circ$}
\put(403,27){\line(-1,-1){15}}
\put(403,27){\line(1,-1){15}}
\put(380,0){0}

\end{picture}

\noindent The numbers under each tree are written in the same condition as before. By condition $(7)$ we reduce $15$ monomials from the tree  $(\bullet_{i_{1}},\circ_{i_{2}},\bullet_{i_{3}},\circ_{i_{4}},x_{i_5})$. Finally, we obtain $224$ basis monomials.

For degree $6$, we have

\begin{picture}(30,80)
\put(7,68){$\bullet$}
\put(10,70){\line(-1,-1){15}}
\put(10,70){\line(1,-1){15}}
\put(22,52){$\bullet$}
\put(25,55){\line(-1,-1){15}}
\put(25,55){\line(1,-1){15}}
\put(36,38){$\bullet$}
\put(39,41){\line(-1,-1){15}}
\put(39,41){\line(1,-1){15}}
\put(50,24){$\bullet$}
\put(53,27){\line(-1,-1){15}}
\put(53,27){\line(1,-1){15}}
\put(64,10){$\bullet$}
\put(67,13){\line(-1,-1){15}}
\put(67,13){\line(1,-1){15}}
\put(50,-20){5}

\put(57,68){$\circ$}
\put(60,70){\line(-1,-1){15}}
\put(60,70){\line(1,-1){15}}
\put(72,52){$\bullet$}
\put(75,55){\line(-1,-1){15}}
\put(75,55){\line(1,-1){15}}
\put(86,38){$\bullet$}
\put(89,41){\line(-1,-1){15}}
\put(89,41){\line(1,-1){15}}
\put(100,24){$\bullet$}
\put(103,27){\line(-1,-1){15}}
\put(103,27){\line(1,-1){15}}
\put(114,10){$\bullet$}
\put(117,13){\line(-1,-1){15}}
\put(117,13){\line(1,-1){15}}
\put(100,-20){24}

\put(107,68){$\bullet$}
\put(110,70){\line(-1,-1){15}}
\put(110,70){\line(1,-1){15}}
\put(122,52){$\circ$}
\put(125,55){\line(-1,-1){15}}
\put(125,55){\line(1,-1){15}}
\put(136,38){$\bullet$}
\put(139,41){\line(-1,-1){15}}
\put(139,41){\line(1,-1){15}}
\put(150,24){$\bullet$}
\put(153,27){\line(-1,-1){15}}
\put(153,27){\line(1,-1){15}}
\put(164,10){$\bullet$}
\put(167,13){\line(-1,-1){15}}
\put(167,13){\line(1,-1){15}}
\put(150,-20){90}

\put(157,68){$\circ$}
\put(160,70){\line(-1,-1){15}}
\put(160,70){\line(1,-1){15}}
\put(172,52){$\circ$}
\put(175,55){\line(-1,-1){15}}
\put(175,55){\line(1,-1){15}}
\put(186,38){$\bullet$}
\put(189,41){\line(-1,-1){15}}
\put(189,41){\line(1,-1){15}}
\put(200,24){$\bullet$}
\put(203,27){\line(-1,-1){15}}
\put(203,27){\line(1,-1){15}}
\put(214,10){$\bullet$}
\put(217,13){\line(-1,-1){15}}
\put(217,13){\line(1,-1){15}}
\put(200,-20){0}

\put(207,68){$\bullet$}
\put(210,70){\line(-1,-1){15}}
\put(210,70){\line(1,-1){15}}
\put(222,52){$\bullet$}
\put(225,55){\line(-1,-1){15}}
\put(225,55){\line(1,-1){15}}
\put(236,38){$\circ$}
\put(239,41){\line(-1,-1){15}}
\put(239,41){\line(1,-1){15}}
\put(250,24){$\bullet$}
\put(253,27){\line(-1,-1){15}}
\put(253,27){\line(1,-1){15}}
\put(264,10){$\bullet$}
\put(267,13){\line(-1,-1){15}}
\put(267,13){\line(1,-1){15}}
\put(250,-20){120}

\put(257,68){$\circ$}
\put(260,70){\line(-1,-1){15}}
\put(260,70){\line(1,-1){15}}
\put(272,52){$\bullet$}
\put(275,55){\line(-1,-1){15}}
\put(275,55){\line(1,-1){15}}
\put(286,38){$\circ$}
\put(289,41){\line(-1,-1){15}}
\put(289,41){\line(1,-1){15}}
\put(300,24){$\bullet$}
\put(303,27){\line(-1,-1){15}}
\put(303,27){\line(1,-1){15}}
\put(314,10){$\bullet$}
\put(317,13){\line(-1,-1){15}}
\put(317,13){\line(1,-1){15}}
\put(300,-20){120}

\put(307,68){$\bullet$}
\put(310,70){\line(-1,-1){15}}
\put(310,70){\line(1,-1){15}}
\put(322,52){$\circ$}
\put(325,55){\line(-1,-1){15}}
\put(325,55){\line(1,-1){15}}
\put(336,38){$\circ$}
\put(339,41){\line(-1,-1){15}}
\put(339,41){\line(1,-1){15}}
\put(350,24){$\bullet$}
\put(353,27){\line(-1,-1){15}}
\put(353,27){\line(1,-1){15}}
\put(364,10){$\bullet$}
\put(367,13){\line(-1,-1){15}}
\put(367,13){\line(1,-1){15}}
\put(350,-20){0}

\put(357,68){$\circ$}
\put(360,70){\line(-1,-1){15}}
\put(360,70){\line(1,-1){15}}
\put(372,52){$\circ$}
\put(375,55){\line(-1,-1){15}}
\put(375,55){\line(1,-1){15}}
\put(386,38){$\circ$}
\put(389,41){\line(-1,-1){15}}
\put(389,41){\line(1,-1){15}}
\put(400,24){$\bullet$}
\put(403,27){\line(-1,-1){15}}
\put(403,27){\line(1,-1){15}}
\put(414,10){$\bullet$}
\put(417,13){\line(-1,-1){15}}
\put(417,13){\line(1,-1){15}}
\put(400,-20){0}

\end{picture}

$$ $$

\begin{picture}(30,80)
\put(7,68){$\bullet$}
\put(10,70){\line(-1,-1){15}}
\put(10,70){\line(1,-1){15}}
\put(22,52){$\bullet$}
\put(25,55){\line(-1,-1){15}}
\put(25,55){\line(1,-1){15}}
\put(36,38){$\bullet$}
\put(39,41){\line(-1,-1){15}}
\put(39,41){\line(1,-1){15}}
\put(50,24){$\circ$}
\put(53,27){\line(-1,-1){15}}
\put(53,27){\line(1,-1){15}}
\put(64,10){$\bullet$}
\put(67,13){\line(-1,-1){15}}
\put(67,13){\line(1,-1){15}}
\put(50,-20){60}

\put(57,68){$\circ$}
\put(60,70){\line(-1,-1){15}}
\put(60,70){\line(1,-1){15}}
\put(72,52){$\bullet$}
\put(75,55){\line(-1,-1){15}}
\put(75,55){\line(1,-1){15}}
\put(86,38){$\bullet$}
\put(89,41){\line(-1,-1){15}}
\put(89,41){\line(1,-1){15}}
\put(100,24){$\circ$}
\put(103,27){\line(-1,-1){15}}
\put(103,27){\line(1,-1){15}}
\put(114,10){$\bullet$}
\put(117,13){\line(-1,-1){15}}
\put(117,13){\line(1,-1){15}}
\put(100,-20){180}

\put(107,68){$\bullet$}
\put(110,70){\line(-1,-1){15}}
\put(110,70){\line(1,-1){15}}
\put(122,52){$\circ$}
\put(125,55){\line(-1,-1){15}}
\put(125,55){\line(1,-1){15}}
\put(136,38){$\bullet$}
\put(139,41){\line(-1,-1){15}}
\put(139,41){\line(1,-1){15}}
\put(150,24){$\circ$}
\put(153,27){\line(-1,-1){15}}
\put(153,27){\line(1,-1){15}}
\put(164,10){$\bullet$}
\put(167,13){\line(-1,-1){15}}
\put(167,13){\line(1,-1){15}}
\put(150,-20){180}

\put(157,68){$\circ$}
\put(160,70){\line(-1,-1){15}}
\put(160,70){\line(1,-1){15}}
\put(172,52){$\circ$}
\put(175,55){\line(-1,-1){15}}
\put(175,55){\line(1,-1){15}}
\put(186,38){$\bullet$}
\put(189,41){\line(-1,-1){15}}
\put(189,41){\line(1,-1){15}}
\put(200,24){$\circ$}
\put(203,27){\line(-1,-1){15}}
\put(203,27){\line(1,-1){15}}
\put(214,10){$\bullet$}
\put(217,13){\line(-1,-1){15}}
\put(217,13){\line(1,-1){15}}
\put(200,-20){0}

\put(207,68){$\bullet$}
\put(210,70){\line(-1,-1){15}}
\put(210,70){\line(1,-1){15}}
\put(222,52){$\bullet$}
\put(225,55){\line(-1,-1){15}}
\put(225,55){\line(1,-1){15}}
\put(236,38){$\circ$}
\put(239,41){\line(-1,-1){15}}
\put(239,41){\line(1,-1){15}}
\put(250,24){$\circ$}
\put(253,27){\line(-1,-1){15}}
\put(253,27){\line(1,-1){15}}
\put(264,10){$\bullet$}
\put(267,13){\line(-1,-1){15}}
\put(267,13){\line(1,-1){15}}
\put(250,-20){0}

\put(257,68){$\circ$}
\put(260,70){\line(-1,-1){15}}
\put(260,70){\line(1,-1){15}}
\put(272,52){$\bullet$}
\put(275,55){\line(-1,-1){15}}
\put(275,55){\line(1,-1){15}}
\put(286,38){$\circ$}
\put(289,41){\line(-1,-1){15}}
\put(289,41){\line(1,-1){15}}
\put(300,24){$\circ$}
\put(303,27){\line(-1,-1){15}}
\put(303,27){\line(1,-1){15}}
\put(314,10){$\bullet$}
\put(317,13){\line(-1,-1){15}}
\put(317,13){\line(1,-1){15}}
\put(300,-20){0}

\put(307,68){$\bullet$}
\put(310,70){\line(-1,-1){15}}
\put(310,70){\line(1,-1){15}}
\put(322,52){$\circ$}
\put(325,55){\line(-1,-1){15}}
\put(325,55){\line(1,-1){15}}
\put(336,38){$\circ$}
\put(339,41){\line(-1,-1){15}}
\put(339,41){\line(1,-1){15}}
\put(350,24){$\circ$}
\put(353,27){\line(-1,-1){15}}
\put(353,27){\line(1,-1){15}}
\put(364,10){$\bullet$}
\put(367,13){\line(-1,-1){15}}
\put(367,13){\line(1,-1){15}}
\put(350,-20){0}

\put(357,68){$\circ$}
\put(360,70){\line(-1,-1){15}}
\put(360,70){\line(1,-1){15}}
\put(372,52){$\circ$}
\put(375,55){\line(-1,-1){15}}
\put(375,55){\line(1,-1){15}}
\put(386,38){$\circ$}
\put(389,41){\line(-1,-1){15}}
\put(389,41){\line(1,-1){15}}
\put(400,24){$\circ$}
\put(403,27){\line(-1,-1){15}}
\put(403,27){\line(1,-1){15}}
\put(414,10){$\bullet$}
\put(417,13){\line(-1,-1){15}}
\put(417,13){\line(1,-1){15}}
\put(400,-20){0}

\end{picture}

$$ $$

\begin{picture}(30,80)
\put(7,68){$\bullet$}
\put(10,70){\line(-1,-1){15}}
\put(10,70){\line(1,-1){15}}
\put(22,52){$\bullet$}
\put(25,55){\line(-1,-1){15}}
\put(25,55){\line(1,-1){15}}
\put(36,38){$\bullet$}
\put(39,41){\line(-1,-1){15}}
\put(39,41){\line(1,-1){15}}
\put(50,24){$\bullet$}
\put(53,27){\line(-1,-1){15}}
\put(53,27){\line(1,-1){15}}
\put(64,10){$\circ$}
\put(67,13){\line(-1,-1){15}}
\put(67,13){\line(1,-1){15}}
\put(50,-20){15}

\put(57,68){$\circ$}
\put(60,70){\line(-1,-1){15}}
\put(60,70){\line(1,-1){15}}
\put(72,52){$\bullet$}
\put(75,55){\line(-1,-1){15}}
\put(75,55){\line(1,-1){15}}
\put(86,38){$\bullet$}
\put(89,41){\line(-1,-1){15}}
\put(89,41){\line(1,-1){15}}
\put(100,24){$\bullet$}
\put(103,27){\line(-1,-1){15}}
\put(103,27){\line(1,-1){15}}
\put(114,10){$\circ$}
\put(117,13){\line(-1,-1){15}}
\put(117,13){\line(1,-1){15}}
\put(100,-20){60}

\put(107,68){$\bullet$}
\put(110,70){\line(-1,-1){15}}
\put(110,70){\line(1,-1){15}}
\put(122,52){$\circ$}
\put(125,55){\line(-1,-1){15}}
\put(125,55){\line(1,-1){15}}
\put(136,38){$\bullet$}
\put(139,41){\line(-1,-1){15}}
\put(139,41){\line(1,-1){15}}
\put(150,24){$\bullet$}
\put(153,27){\line(-1,-1){15}}
\put(153,27){\line(1,-1){15}}
\put(164,10){$\circ$}
\put(167,13){\line(-1,-1){15}}
\put(167,13){\line(1,-1){15}}
\put(150,-20){180}

\put(157,68){$\circ$}
\put(160,70){\line(-1,-1){15}}
\put(160,70){\line(1,-1){15}}
\put(172,52){$\circ$}
\put(175,55){\line(-1,-1){15}}
\put(175,55){\line(1,-1){15}}
\put(186,38){$\bullet$}
\put(189,41){\line(-1,-1){15}}
\put(189,41){\line(1,-1){15}}
\put(200,24){$\bullet$}
\put(203,27){\line(-1,-1){15}}
\put(203,27){\line(1,-1){15}}
\put(214,10){$\circ$}
\put(217,13){\line(-1,-1){15}}
\put(217,13){\line(1,-1){15}}
\put(200,-20){0}

\put(207,68){$\bullet$}
\put(210,70){\line(-1,-1){15}}
\put(210,70){\line(1,-1){15}}
\put(222,52){$\bullet$}
\put(225,55){\line(-1,-1){15}}
\put(225,55){\line(1,-1){15}}
\put(236,38){$\circ$}
\put(239,41){\line(-1,-1){15}}
\put(239,41){\line(1,-1){15}}
\put(250,24){$\bullet$}
\put(253,27){\line(-1,-1){15}}
\put(253,27){\line(1,-1){15}}
\put(264,10){$\circ$}
\put(267,13){\line(-1,-1){15}}
\put(267,13){\line(1,-1){15}}
\put(250,-20){180}

\put(257,68){$\circ$}
\put(260,70){\line(-1,-1){15}}
\put(260,70){\line(1,-1){15}}
\put(272,52){$\bullet$}
\put(275,55){\line(-1,-1){15}}
\put(275,55){\line(1,-1){15}}
\put(286,38){$\circ$}
\put(289,41){\line(-1,-1){15}}
\put(289,41){\line(1,-1){15}}
\put(300,24){$\bullet$}
\put(303,27){\line(-1,-1){15}}
\put(303,27){\line(1,-1){15}}
\put(314,10){$\circ$}
\put(317,13){\line(-1,-1){15}}
\put(317,13){\line(1,-1){15}}
\put(300,-20){180}

\put(307,68){$\bullet$}
\put(310,70){\line(-1,-1){15}}
\put(310,70){\line(1,-1){15}}
\put(322,52){$\circ$}
\put(325,55){\line(-1,-1){15}}
\put(325,55){\line(1,-1){15}}
\put(336,38){$\circ$}
\put(339,41){\line(-1,-1){15}}
\put(339,41){\line(1,-1){15}}
\put(350,24){$\bullet$}
\put(353,27){\line(-1,-1){15}}
\put(353,27){\line(1,-1){15}}
\put(364,10){$\circ$}
\put(367,13){\line(-1,-1){15}}
\put(367,13){\line(1,-1){15}}
\put(350,-20){0}

\put(357,68){$\circ$}
\put(360,70){\line(-1,-1){15}}
\put(360,70){\line(1,-1){15}}
\put(372,52){$\circ$}
\put(375,55){\line(-1,-1){15}}
\put(375,55){\line(1,-1){15}}
\put(386,38){$\circ$}
\put(389,41){\line(-1,-1){15}}
\put(389,41){\line(1,-1){15}}
\put(400,24){$\bullet$}
\put(403,27){\line(-1,-1){15}}
\put(403,27){\line(1,-1){15}}
\put(414,10){$\circ$}
\put(417,13){\line(-1,-1){15}}
\put(417,13){\line(1,-1){15}}
\put(400,-20){0}

\end{picture}

$$ $$

\begin{picture}(30,80)
\put(7,68){$\bullet$}
\put(10,70){\line(-1,-1){15}}
\put(10,70){\line(1,-1){15}}
\put(22,52){$\bullet$}
\put(25,55){\line(-1,-1){15}}
\put(25,55){\line(1,-1){15}}
\put(36,38){$\bullet$}
\put(39,41){\line(-1,-1){15}}
\put(39,41){\line(1,-1){15}}
\put(50,24){$\circ$}
\put(53,27){\line(-1,-1){15}}
\put(53,27){\line(1,-1){15}}
\put(64,10){$\circ$}
\put(67,13){\line(-1,-1){15}}
\put(67,13){\line(1,-1){15}}
\put(50,-20){20}

\put(57,68){$\circ$}
\put(60,70){\line(-1,-1){15}}
\put(60,70){\line(1,-1){15}}
\put(72,52){$\bullet$}
\put(75,55){\line(-1,-1){15}}
\put(75,55){\line(1,-1){15}}
\put(86,38){$\bullet$}
\put(89,41){\line(-1,-1){15}}
\put(89,41){\line(1,-1){15}}
\put(100,24){$\circ$}
\put(103,27){\line(-1,-1){15}}
\put(103,27){\line(1,-1){15}}
\put(114,10){$\circ$}
\put(117,13){\line(-1,-1){15}}
\put(117,13){\line(1,-1){15}}
\put(100,-20){60}

\put(107,68){$\bullet$}
\put(110,70){\line(-1,-1){15}}
\put(110,70){\line(1,-1){15}}
\put(122,52){$\circ$}
\put(125,55){\line(-1,-1){15}}
\put(125,55){\line(1,-1){15}}
\put(136,38){$\bullet$}
\put(139,41){\line(-1,-1){15}}
\put(139,41){\line(1,-1){15}}
\put(150,24){$\circ$}
\put(153,27){\line(-1,-1){15}}
\put(153,27){\line(1,-1){15}}
\put(164,10){$\circ$}
\put(167,13){\line(-1,-1){15}}
\put(167,13){\line(1,-1){15}}
\put(150,-20){60}

\put(157,68){$\circ$}
\put(160,70){\line(-1,-1){15}}
\put(160,70){\line(1,-1){15}}
\put(172,52){$\circ$}
\put(175,55){\line(-1,-1){15}}
\put(175,55){\line(1,-1){15}}
\put(186,38){$\bullet$}
\put(189,41){\line(-1,-1){15}}
\put(189,41){\line(1,-1){15}}
\put(200,24){$\circ$}
\put(203,27){\line(-1,-1){15}}
\put(203,27){\line(1,-1){15}}
\put(214,10){$\circ$}
\put(217,13){\line(-1,-1){15}}
\put(217,13){\line(1,-1){15}}
\put(200,-20){0}

\put(207,68){$\bullet$}
\put(210,70){\line(-1,-1){15}}
\put(210,70){\line(1,-1){15}}
\put(222,52){$\bullet$}
\put(225,55){\line(-1,-1){15}}
\put(225,55){\line(1,-1){15}}
\put(236,38){$\circ$}
\put(239,41){\line(-1,-1){15}}
\put(239,41){\line(1,-1){15}}
\put(250,24){$\circ$}
\put(253,27){\line(-1,-1){15}}
\put(253,27){\line(1,-1){15}}
\put(264,10){$\circ$}
\put(267,13){\line(-1,-1){15}}
\put(267,13){\line(1,-1){15}}
\put(250,-20){0}

\put(257,68){$\circ$}
\put(260,70){\line(-1,-1){15}}
\put(260,70){\line(1,-1){15}}
\put(272,52){$\bullet$}
\put(275,55){\line(-1,-1){15}}
\put(275,55){\line(1,-1){15}}
\put(286,38){$\circ$}
\put(289,41){\line(-1,-1){15}}
\put(289,41){\line(1,-1){15}}
\put(300,24){$\circ$}
\put(303,27){\line(-1,-1){15}}
\put(303,27){\line(1,-1){15}}
\put(314,10){$\circ$}
\put(317,13){\line(-1,-1){15}}
\put(317,13){\line(1,-1){15}}
\put(300,-20){0}

\put(307,68){$\bullet$}
\put(310,70){\line(-1,-1){15}}
\put(310,70){\line(1,-1){15}}
\put(322,52){$\circ$}
\put(325,55){\line(-1,-1){15}}
\put(325,55){\line(1,-1){15}}
\put(336,38){$\circ$}
\put(339,41){\line(-1,-1){15}}
\put(339,41){\line(1,-1){15}}
\put(350,24){$\circ$}
\put(353,27){\line(-1,-1){15}}
\put(353,27){\line(1,-1){15}}
\put(364,10){$\circ$}
\put(367,13){\line(-1,-1){15}}
\put(367,13){\line(1,-1){15}}
\put(350,-20){0}

\put(357,68){$\circ$}
\put(360,70){\line(-1,-1){15}}
\put(360,70){\line(1,-1){15}}
\put(372,52){$\circ$}
\put(375,55){\line(-1,-1){15}}
\put(375,55){\line(1,-1){15}}
\put(386,38){$\circ$}
\put(389,41){\line(-1,-1){15}}
\put(389,41){\line(1,-1){15}}
\put(400,24){$\circ$}
\put(403,27){\line(-1,-1){15}}
\put(403,27){\line(1,-1){15}}
\put(414,10){$\circ$}
\put(417,13){\line(-1,-1){15}}
\put(417,13){\line(1,-1){15}}
\put(400,-20){0}

\end{picture}

$$ $$

\noindent By condition $(7)$, we reduce $45$ monomials from the tree  $(\bullet_{i_{1}},\bullet_{i_{2}},\circ_{i_{3}},\bullet_{i_{4}},\circ_{i_{5}},x_{i_6})$ and $45$ monomials from the tree $(\circ_{i_{1}},\bullet_{i_{2}},\circ_{i_{3}},\bullet_{i_{4}},\circ_{i_{5}},x_{i_6})$. Finally, we obtain $1444$ basis monomials. For degree $7$, analogical constriction of trees gives $10870$ monomials. Calculating the dimension of the operad $\M\F$-$\manifold$, one can obtain:
\begin{center}
\begin{tabular}{c|cccccccc}
 $n$ & 1 & 2 & 3 & 4 & 5 & 6 & 7 \\
 \hline 
 $\dim(\M\F\textrm{-}\manifold(n)) $ & 1 & 2 & 9 & 42 & 224 & 1444 & 10870
\end{tabular}
\end{center}

\begin{theorem}
The set $\mathcal{F}$ is a basis of the metabelian $\F\textrm{-}\manifold$ operad.
\end{theorem}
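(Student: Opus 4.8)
The proof will follow the same two-step scheme as the preceding theorem on $\M\TP\langle X\rangle$: one shows $\mathcal{F}$ spans the operad and then that it is linearly independent, the latter by exhibiting an explicit model; and since the operad components are the multilinear parts of the free metabelian $\F$-manifold algebra on a countable set, it suffices to argue with that free algebra. For the spanning step, I would start from an arbitrary tree-monomial and, using anti-commutativity and the Jacobi identity for $[\cdot,\cdot]$ together with associativity and commutativity of $\circ$ and the rule $x_i\circ(x_j\circ w)=(x_ix_j)\circ w$, rewrite it as a combination of right-normed caterpillar monomials $x_{i_1}*(x_{i_2}*(\cdots*x_{i_n}))$; along the way the metabelian relations (\ref{10}), in the forms $abcd=0$, $ab[c,d]=0$, $[a,b][c,d]=0$, $[ab,cd]=0$ and $[[a,b],cd]=0$, $[[a,b],[c,d]]=0$ applied with iterated brackets in place of $[a,b]$, kill every monomial carrying a forbidden subpattern — which is exactly the content of condition~(1) and of the zero entries in the tree tables. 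Then the compatibility (\ref{001}) together with (\ref{11}), (\ref{12}), (\ref{21}), (\ref{22}) and the basis of the free metabelian Lie algebra of \cite{Bahturin1973} serve to order the generators on the maximal runs of $\bullet$- and of $\circ$-vertices (conditions~(2)--(5)) and to discard the surplus monomials recorded by conditions~(6) and~(7); this gives that $\mathcal{F}$ is a spanning set. I would not try to prove confluence of this rewriting directly — independence comes from the next step.

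For the independence step, mimicking the model $A\langle X\rangle$ of the previous proof, I would put a metabelian $\F$-manifold algebra structure on the span of $\mathcal{F}$: the product of two basis monomials of degree $>1$ is $0$; the product of a basis monomial with a single generator $x_t$, under $\bullet$ and under $\circ$, is prescribed by explicit formulas read off from the reductions of the spanning step; and on components of degree $\le 4$ the operations are defined directly so as to be consistent with (\ref{eq:manifold}), (\ref{11}), (\ref{12}), (\ref{21}), (\ref{22}). One then checks by direct computation that $\circ$ is associative and commutative, that $[\cdot,\cdot]$ is anti-symmetric and satisfies Jacobi, that (\ref{eq:manifold}) holds, and that the metabelian identities (\ref{10}) hold. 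The universal property then provides a surjective homomorphism from the free metabelian $\F$-manifold algebra onto this model fixing the generators; since $\mathcal{F}$ spans the source and is a basis of the target, the homomorphism is an isomorphism and $\mathcal{F}$ is a basis. As a byproduct the arity-wise counts $1,2,9,42,224,1444,10870,\dots$ are the dimensions of $\M\F\textrm{-}\manifold(n)$, which one may also corroborate by computing the metabelian quotient from the basis of the $\F$-manifold operad given in \cite{Dotsenko2019}.

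The crux — and where I expect the real work and the risk of error to lie — is verifying that this model obeys the $\F$-manifold compatibility (\ref{eq:manifold}). Being a multilinear relation of arity $4$, it has to be checked on monomials of degree $\ge 5$ by substituting basis monomials into all four slots and re-expanding through the multiplication tables, and the bookkeeping is heaviest precisely where conditions~(6) and~(7) intervene, these being the non-generic reductions where the naive pattern enumeration overcounts. A secondary difficulty is pinning down the hand-defined degree-$\le 4$ part of the model exactly, since everything in higher degrees is bootstrapped from it.
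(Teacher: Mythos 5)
Your spanning argument coincides with the paper's: the same identities are invoked for the same conditions (1)--(7), and the reduction to right-normed caterpillar monomials is identical. Where you genuinely diverge is the linear-independence step. The paper builds no model at all: it orients the defining identities into rewriting rules whose normal forms are exactly $\mathcal{F}$ and then appeals to the operadic diamond lemma, asserting that all compositions of these rules are trivial, with the verification delegated to hand computation or to the Gr\"obner-bases-for-operads software of Dotsenko and Heijltjes. Your alternative --- endowing the span of $\mathcal{F}$ with an explicit multiplication table, verifying the defining identities of a metabelian $\F$-manifold algebra on that model, and concluding by the universal property --- is precisely the strategy the paper itself uses for $\M\TP\langle X\rangle$ in Section~2, so it is a perfectly legitimate route here as well; your reduction of the operadic statement to the multilinear parts of the free algebra is also sound in characteristic~$0$. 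What the paper's route buys is mechanizability: confluence is a finite, systematic check on compositions of leading terms, well suited to computer verification. What your route buys is independence from the shuffle-operad Gr\"obner machinery and a concrete faithful representation; its price, which you correctly locate, is that you must actually write out the full multiplication table (the paper does not do this in the $\F$-manifold case) and verify the arity-$4$ compatibility (\ref{eq:manifold}) on it, a markedly heavier hand computation than the transposed Poisson analogue, especially where conditions (6) and (7) intervene. Neither the paper nor your proposal carries out its respective verification in print, so at the level of detail presented the two arguments stand on equal footing.
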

\begin{proof}
Firstly, we show that any monomial can be written as a sum of monomials which satisfy conditions (1)-(7). The condition (1) holds by associative and metabelian identities on $\cdot$. The condition (2) holds by identities (\ref{11}) and (\ref{12}). The condition (3) holds by commutative identity on $\cdot$. The conditions (4) and (5) can be achieved by identities of metabelian Lie algebra on $[\cdot,\cdot]$. The condition (6) can be achieved by identities (\ref{21}) and (\ref{22}). The condition (7) holds by identity (\ref{001}).

Finally, to obtain the spanning set $\mathcal{F}$ in operad $\M\F\textrm{-}\manifold$ we use the rewriting rules of operads $\Lie$ and $\Com$, and rewriting rules obtained from identities (\ref{10}), $(\ref{11})$, $(\ref{12})$, $(\ref{001})$, $(\ref{21})$, $(\ref{22})$.

To prove that the set $\mathcal{F}$ is a basis, we have to show that the compositions of rewriting rules are trivial, see \cite{bremner-dotsenko}. The calculations are straightforward or these long calculations can be done using the computer software \cite{DotsHij}.
\end{proof}


 \end{document}